\numberwithin{equation}{section}
\newtheorem{theorem}{Theorem}
\newtheorem*{Theorem}{Theorem}
\numberwithin{theorem}{section}
\newtheorem{lemma}[theorem]{Lemma}
\newtheorem{corollary}[theorem]{Corollary}
\theoremstyle{remark}
\newtheorem*{remark}{Remark}
\theoremstyle{definition}
\newcommand{\N}{\mathbb{N}}
\newcommand{\R}{\mathbb{R}}
\newcommand{\Z}{\mathbb{Z}}
\newcommand{\C}{\mathbb{C}}
\newcommand{\OO}{\mathscr{O}}
\newcommand{\e}{\varepsilon}
\newcommand{\li}{{\rm Li}_2}
\renewcommand{\(}{\left(}
\renewcommand{\)}{\right)}
\newcommand{\im}{\operatorname{Im}}
\newcommand{\Log}{\operatorname{Log}}
\newcommand{\OEO}{\overline{OE}}
\newcommand{\OOO}{\overline{\mathscr{O}}}
\renewenvironment{proof}[1][Proof]{\begin{trivlist} \item[\hskip \labelsep {\bfseries #1:}]}{\qed\end{trivlist}}
\author{Min-Joo Jang}
\address{Mathematical Institute\\University of Cologne\\ Weyertal 86-90 \\ 50931 Cologne \\Germany}
\email{min-joo.jang@uni-koeln.de}
\begin{document}

\title{Asymptotic behavior of odd-even partitions}

\begin{abstract}
Andrews studied a function which appears in Ramanujan's identities. 
In Ramanujan's ``Lost'' Notebook, there are several formulas involving this function, but they are not as simple as the identities with other similar shape of functions. 
Nonetheless,  Andrews found out that this function possesses combinatorial information, odd-even partition. In this paper, we provide the asymptotic formula for this combinatorial object.  We also study its companion odd-even overpartitions.
\end{abstract}

\maketitle

\section{Introduction and Statement of results}

Andrews \cite{Andrews} considered a certain family of functions and noticed a mysterious phenomenon. More precisely, Andrews looked into $q$-series identities involving hypergeometric functions, for example in particular (\cite{Andrews} and \cite[Page 19 and Page 104]{Andrews2})
\begin{align}
1+\sum_{n=1}^\infty \frac{q^n}{\left(1-q\right)\left(1-q^2\right)\cdots\left(1-q^n\right)}&=\prod_{n=1}^\infty \frac{1}{\left(1-q^n\right)}, \notag\\
1+\sum_{n=1}^\infty \frac{q^{\frac{n(n+1)}{2}}}{\left(1-q\right)\left(1-q^2\right)\cdots\left(1-q^n\right)}&=\prod_{n=1}^\infty \left(1+q^n\right),  \notag\\
1+\sum_{n=1}^\infty \frac{q^{n^2}}{\left(1-q\right)\left(1-q^2\right)\cdots\left(1-q^n\right)}&=\prod_{n=1}^\infty \frac{1}{\left(1-q^{5n+1}\right)\left(1-q^{5n+4}\right)}, \notag\\
1+\sum_{n=1}^\infty \frac{q^{n}}{\left(1-q^2\right)\left(1-q^4\right)\cdots\left(1-q^{2n}\right)}&=\prod_{n=1}^\infty \frac{1}{\left(1-q^{2n+1}\right)}, \notag\\ 
1+\sum_{n=1}^\infty \frac{q^{n^2}}{\left(1-q^2\right)\left(1-q^4\right)\cdots\left(1-q^{2n}\right)}&=\prod_{n=1}^\infty \left(1+q^{2n+1}\right), \notag \\ 
1+\sum_{n=1}^\infty \frac{q^{\frac{n(n+1)}{2}}}{\left(1-q^2\right)\left(1-q^4\right)\cdots\left(1-q^{2n}\right)}&=?. \label{andrewsft}
\end{align} 
While the others can be nicely written in terms of infinite product (so that it turns out that they are modular forms up to $q$ powers), Andrews did not find any such shape of identities for \eqref{andrewsft}. Moreover, Zagier \cite[Table 1]{Don} figured out that \eqref{andrewsft} is not modular. Nonetheless, Andrews \cite{Andrews} provided a combinatorial interpretation for this function, namely  {\it odd-even partitions}.

 Recall that a {\it partition} of positive integer $n$ is a nonincreasing positive integer sequence whose sum is $n$. Define a partition function $OE(n)$ by the number of partitions of $n$ in which the parts alternate in parity starting with the smallest part odd. In other words, $OE(n)$ counts the number of {\it odd-even partitions of $n$}. For instance, there are no odd-even partitions of $2$ and the odd-even partitions of $3$ are 3 and 2+1, and thus $OE(2)=0$ and $OE(3)=2$. 
Then the generating function for the odd-even partitions is given
\begin{equation}\label{OEgf}
\OO(q):=1+\sum_{n=1}^\infty OE(n)q^n=\sum_{m=0}^\infty\frac{q^{\frac{m(m+1)}{2}}}{\(q^2;q^2\)_m}
\end{equation}
which is exactly identical to \eqref{andrewsft}.
Here the $q$-Pochhammer symbol is defined as $(a)_n:=(a;q)_n:=\prod_{j=1}^n (1-aq^{j-1})$ for $n\in\mathbb{N}_0\cup\{\infty\}$.

In this paper, we investigate the asymptotic behavior of $OE(n)$. 
In order to study the asymptotic behavior of the coefficients of a series, one can either use the Circle Method \cite{circle1, circle3, Wright} or apply Ingham's Tauberian Theorem \cite{Ingham}. Since $\OO(q)$ has a pole at every root of unity and it is not easy to find the bounds for $\OO(q)$ at every root of unity, it is difficult to use the Circle Method in our case. Moreover, as $OE(n)$ is not monotonically increasing, we cannot directly apply Ingham's Tauberian Theorem to our case either (see Section 2 for more details). Thus, we need to slightly modify our function so that we can apply Ingham's Tauberian Theorem.

\begin{theorem}\label{main}
We have
\[
OE(n)\sim  \frac{1}{2\sqrt{5} n^{\frac34}} e^{\pi\sqrt{\frac{n}{5}}}
\]
as $n\to \infty$.
\end{theorem}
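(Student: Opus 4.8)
The plan is to first pin down the behavior of $\OO(q)$ as $q=e^{-t}\to 1^-$ by a saddle-point analysis, and then to convert this into coefficient asymptotics via Ingham's Tauberian theorem. For the analytic input I would start from the factorization
\[
\OO(e^{-t})=\frac{1}{(e^{-2t};e^{-2t})_\infty}\sum_{m\ge 0}e^{-tm(m+1)/2}\,(e^{-2t(m+1)};e^{-2t})_\infty,
\]
which isolates the singular Euler factor. Using the modular estimate $(e^{-2t};e^{-2t})_\infty\sim\sqrt{\pi/t}\,e^{-\pi^2/(12t)}$ together with the Euler--Maclaurin expansion $\log(e^{-x};e^{-2t})_\infty=-\tfrac{1}{2t}\li(e^{-x})+\tfrac12\log(1-e^{-x})+O(t)$, the logarithm of the $m$-th summand (with $x=2t(m+1)$) becomes $\tfrac1t\,h(x)+O(1)$, where $h(x)=-\tfrac{x^2}{8}-\tfrac12\li(e^{-x})$.

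Next I would locate the saddle by solving $h'(x)=0$, i.e. $e^{-x/2}=1-e^{-x}$, whose relevant root is $e^{-x^*/2}=\rho:=\tfrac{\sqrt5-1}{2}$. Evaluating $h$ there uses the golden-ratio dilogarithm value $\li(\rho^2)=\tfrac{\pi^2}{15}-\log^2\rho$, which collapses everything to the clean number $h(x^*)=-\pi^2/30$, while the subleading $O(1)$ terms cancel exactly at $x^*$. A Laplace approximation of the sum over $m$, using $h''(x^*)=-\sqrt5/4$ so that the saddle has width of order $t^{-1/2}$, then yields
\[
\OO(e^{-t})\sim\frac{\sqrt2}{5^{1/4}}\,e^{\pi^2/(20t)}\qquad (t\to 0^+).
\]
The delicate point here is to keep track of the several $O(1)$ contributions so that the constant $\tfrac{\sqrt2}{5^{1/4}}$, and in particular the \emph{absence} of any power of $t$, come out correctly, since both feed directly into the final constant.

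With exponent parameter $\beta=0$, growth constant $A=\pi^2/20$, and amplitude $\lambda=\sqrt2/5^{1/4}$, substituting into the conclusion of Ingham's theorem would already reproduce $\tfrac1{2\sqrt5}n^{-3/4}e^{\pi\sqrt{n/5}}$ exactly --- \emph{provided} $OE(n)$ were eventually non-decreasing. Since it is not (for instance $OE(3)=2>0=OE(4)$), I would instead apply Ingham to the manifestly non-decreasing sequence of partial sums $b_n:=\sum_{k\le n}OE(k)$, whose generating function is $\OO(q)/(1-q)$ and which satisfies $\OO(e^{-t})/(1-e^{-t})\sim\tfrac{\sqrt2}{5^{1/4}}\,t^{-1}e^{\pi^2/(20t)}$. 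Ingham then gives the rigorous statement $b_n\sim\tfrac1\pi\,n^{-1/4}e^{\pi\sqrt{n/5}}=:g(n)$.

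The remaining, and in my view hardest, step is to pass from $b_n$ back to $OE(n)=b_n-b_{n-1}$. Formally differencing the smooth model $g$ gives $g(n)-g(n-1)\sim g'(n)\sim\tfrac1{2\sqrt5}n^{-3/4}e^{\pi\sqrt{n/5}}$, which is precisely the claimed asymptotic; but differencing an asymptotic relation is illegitimate without extra regularity, exactly because $OE(n)$ genuinely fluctuates. The obstacle is therefore to rule out fluctuations on the scale of the main term --- e.g. by proving that $OE(n)$ is eventually monotonic through a combinatorial injection of the odd-even partitions of $n$ into those of $n+1$, or by a one-sided Tauberian bound controlling $b_n-b_{n-h}$ over short windows $h=o(\sqrt n)$. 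Once such regularity is available, squeezing $OE(n)$ between windowed averages of $b$ forces $OE(n)\sim g'(n)$ and completes the proof.
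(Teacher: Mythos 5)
Your analytic input is correct: the saddle-point analysis (Euler--Maclaurin for the tail Pochhammer, saddle at $e^{-x^*/2}=\rho$, the golden-ratio dilogarithm value, $h''(x^*)=-\sqrt5/4$) reproduces exactly the paper's Corollary $\OO\(e^{-\e}\)\sim\sqrt{2/\sqrt5}\,e^{\pi^2/(20\e)}$, obtained there from Zagier's expansion of the individual terms. The problem is the Tauberian half, and the gap you flag at the end is genuine and is not fillable by either of the fixes you suggest. Eventual monotonicity of $OE(n)$ cannot be extracted from the asymptotics you have (consecutive terms are asymptotically \emph{equal}, so the sign of $OE(n+1)-OE(n)$ lives entirely in lower-order terms), and no injection from odd-even partitions of $n$ into those of $n+1$ is available --- the parity-alternation condition obstructs adding $1$ anywhere; the only easy injection is $n\mapsto n+2$ (add $2$ to the largest part). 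Moreover, even granting that easy fact $OE(n)\le OE(n+2)$, your partial-sum route cannot recover the theorem: sandwiching $b_n-b_{n-2h}$ using two-step monotonicity only controls the \emph{sum} $OE(n)+OE(n+1)$ (indeed $OE(n)+OE(n+1)$ is genuinely weakly increasing and Ingham gives $OE(n)+OE(n+1)\sim\frac{1}{\sqrt5}n^{-3/4}e^{\pi\sqrt{n/5}}$ rigorously), but nothing in this argument rules out the mass oscillating between the two parities, e.g.\ $OE(n)\approx\frac23$ of the sum for even $n$ and $\frac13$ for odd $n$.

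The paper's resolution is precisely to break this parity symmetry at the level of the generating function: write $\OO=\OO_e+\OO_o$ with $\OO_e(q)=\sum_n OE(2n)q^{2n}$, $\OO_o(q)=\sum_n OE(2n+1)q^{2n+1}$, and prove the refined statement $\OO_e\(e^{-\e}\)\sim\OO_o\(e^{-\e}\)\sim\frac{1}{\sqrt{2\sqrt5}}e^{\pi^2/(20\e)}$, i.e.\ that each parity class captures exactly half. This requires redoing your saddle-point analysis restricted to $m\equiv j\pmod 4$ (the parity of $m(m+1)/2$ depends on $m\bmod 4$), and the paper carries this out by recognizing the restricted Gaussian sum over the saddle as a Jacobi theta value and applying modular inversion, which shows each class $\mathcal{S}_j$, $j\in\{0,1,2,3\}$, contributes the same main term $\frac{1}{2\sqrt{2\sqrt5}}e^{\pi^2/(20\e)}$. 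With that in hand, Ingham's theorem applies legitimately to each of the two weakly increasing sequences $\left(OE(2n)\right)_n$ and $\left(OE(2n+1)\right)_n$ separately, and the two resulting asymptotics combine to the claim for all $n$. So the missing idea in your proposal is not extra regularity of $b_n$, but the parity-refined asymptotic expansion of the generating function; without it, no Tauberian argument built on $\OO$ alone can separate $OE(n)$ from $OE(n+1)$.
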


We also investigate the asymptotics of {\it odd-even overpartitions}, studied by Lovejoy \cite{Jeremy}. Recall that an {\it overpartition} of positive interger $n$ is a partition of $n$ in which the first occurrence (equivalently, the final occurrence) of a number may be overlined. An {\it odd-even overpartition} is an overpartition with the smallest part odd and such that the difference between successive parts is odd if the smaller is nonoverlined and even otherwise.
For example, there are no odd-even overpartitions of 2, the odd-even overpartitions of $3$ are
$\overline{3},\ 3,\ \overline{2}+1$, and $2+1$, and the odd-even partitions of $4$ are $\overline{3}+\overline{1}$ and $3+\overline{1}$. Notice that if all parts are non-overlined, then we have the odd-even partitions. We denote $\OEO(n)$ by the number of odd-even overpartitions of $n$ and define $\OEO(0):=1$. The generating function is given in \cite{Jeremy} 
\[
\OOO(q):=\sum_{n=0}^\infty {\OEO(n)}q^n= \sum_{m=0}^\infty\frac{(-1)_m q^{\frac{m(m+1)}{2}}}{\(q^2;q^2\)_m}=(-q)_\infty f(q),
\] 
where 
\begin{equation}\label{f(q)}
f(q):=\sum_{n=0}^\infty \frac{q^{n^2}}{(-q)^2_n}
\end{equation}
is one of Ramanujan's third order mock theta functions. These functions appeared in Ramanujan's deathbed letter to Hardy and are now known as the holomorphic parts of weight $1/2$ {\it harmonic Maass forms} (see \cite{Sander}).
We remark that the generating function for the odd-even overpartitions is a {\it mixed mock modular form}, i.e., the product of a modular form and a mock theta function. From this fact, we can apply Wright's Circle Method \cite{Wright} to obtain the asymptotic formula for $\OEO(n)$.

\begin{theorem}\label{od-over}
We have
\[
\OEO(n)\sim  \frac{1}{3^{\frac54} n^{\frac34}} e^{\pi\sqrt{\frac{n}{3}}}
\]
as $n\to \infty$.

\end{theorem}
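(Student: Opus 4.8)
The plan is to apply Wright's Circle Method directly to $\OOO(q)=(-q)_\infty f(q)$, exploiting the mixed mock modularity recorded above. Writing $\OEO(n)$ as a Cauchy integral,
\[
\OEO(n)=\frac{1}{2\pi i}\oint\frac{\OOO(q)}{q^{n+1}}\,dq,
\]
I would set $q=e^{-z}$ with $z=\beta+2\pi i\theta$ and take the radius $\beta=\beta(n)$ to be the saddle point. The integral then splits into a major arc, a shrinking neighbourhood of $\theta=0$ (that is, of $q=1$) on which $\OOO(q)$ is replaced by its singular behaviour, and a minor arc on which $\OOO(q)$ must be shown to be exponentially smaller. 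The whole argument rests on two ingredients: a precise asymptotic expansion of $\OOO(q)$ as $z\to0$, and uniform upper bounds away from $q=1$.

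For the major-arc analysis I would treat the two factors separately. The modular factor is an eta-quotient, $(-q)_\infty=(q^2;q^2)_\infty/(q;q)_\infty$, so the transformation law of $\eta$ gives, as $z\to0$,
\[
(-q)_\infty\sim\frac{1}{\sqrt2}\,\exp\!\Big(\frac{\pi^2}{12z}\Big).
\]
For the mock factor I would use that $q^{-1/24}f(q)$ is the holomorphic part of a weight-$1/2$ harmonic Maass form with a unary theta shadow; a Watson-type transformation to the cusp at $q=1$ (equivalently, expanding the completion as $\tau\to0$ and showing that the non-holomorphic period integral of the shadow contributes only lower-order terms) yields the finite radial limit $f(e^{-z})\to\tfrac43$. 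Multiplying, I obtain
\[
\OOO(e^{-z})\sim\frac{2\sqrt2}{3}\,\exp\!\Big(\frac{\pi^2}{12z}\Big),
\]
which already fixes the exponential rate: the saddle point of $\frac{\pi^2}{12z}+nz$ sits at $z=\frac{\pi}{2\sqrt{3n}}$ and produces $e^{\pi\sqrt{n/3}}$.

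The final step is a standard Wright/saddle-point lemma: if $F(e^{-z})\sim\gamma\,z^{w}e^{A/z}$ as $z\to0$ with adequate control elsewhere, then $[q^n]F\sim\frac{\gamma}{2\sqrt\pi}A^{\frac{w}{2}+\frac14}n^{-\frac{w}{2}-\frac34}e^{2\sqrt{An}}$. Here $A=\pi^2/12$, $w=0$, and $\gamma=\tfrac{2\sqrt2}{3}$, and the constants collapse to $3^{-5/4}$, giving exactly the claimed $\OEO(n)\sim 3^{-5/4}\,n^{-3/4}\,e^{\pi\sqrt{n/3}}$. The main obstacle is the mock part: since $f(q)$ is not modular, controlling it requires its harmonic Maass form completion both to extract the constant $\tfrac43$ at $q=1$ with a usable error term and, more delicately, to bound $\OOO(q)$ on the minor arc. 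The critical point there is $q=-1$, where $f(q)$ itself blows up (its coefficients grow like $e^{\pi\sqrt{n/6}}$); one must combine the modular companion of $f$ with the exponential vanishing of $(-q)_\infty$ at $q=-1$ to verify that this, and every other root of unity, contributes strictly less than $q=1$, so that the major arc indeed dominates.
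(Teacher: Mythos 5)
Your proposal has the same skeleton as the paper's proof: Wright's Circle Method with dominant pole at $q=1$, the eta-transformation giving $(-q)_\infty\sim\tfrac{1}{\sqrt2}e^{\pi^2/(12z)}$, the radial limit $f(e^{-z})\to\tfrac43$, and a saddle-point/Bessel lemma; your constant bookkeeping ($A=\pi^2/12$, $\gamma=\tfrac{2\sqrt2}{3}$, saddle at $z=\tfrac{\pi}{2\sqrt{3n}}$, final constant $3^{-5/4}$) is correct and agrees with the paper. One remark on the major arc: invoking the harmonic Maass completion of $f$ to extract the limit $\tfrac43$ is much heavier than necessary. The series defining $f$ converges in a neighborhood of $q=1$ (by AM--GM, $\left|(-q;q)_n\right|^2$ dominates $4^n|q|^{n(n+1)/2}$ there), so $f$ is analytic at $q=1$ and the paper simply applies Taylor's theorem, $f(q)=\tfrac43+O(|\tau|)$, with no mock modularity at all on the major arc.

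The genuine gap is your minor arc. The paper's proof stands on Watson's identity
\[
f(q)=\frac{2}{(q)_\infty}\sum_{n\in\Z}\frac{(-1)^n q^{\frac{n(3n+1)}{2}}}{1+q^n},
\]
which recasts $\OOO(q)$ as $\frac{2(q^2;q^2)_\infty}{(q)_\infty^2}$ times a bilateral sum. On the whole range $My<|x|\le\tfrac12$ that sum is bounded trivially by $O(y^{-3/2})$ (using $|1+q^n|\ge 1-|q|$ and a theta estimate), while the eta-quotient $\frac{(q^2;q^2)_\infty}{(q)_\infty^2}$ has a logarithm with positive coefficients, so it is controlled by its value at $|q|$ diminished by $2|q|\left(\frac{1}{1-|q|}-\frac{1}{|1-q|}\right)\gg\frac{1}{\pi y}\left(1-\frac{1}{\sqrt{1+M^2}}\right)$; for $M>5.543\ldots$ this exponential saving pulls the bound $e^{\pi/(8y)}$ strictly below the major-arc size $e^{\pi/(24y)}$. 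Your proposal replaces all of this with the statement that one ``must verify'' that $q=-1$ and every other root of unity contribute strictly less, by combining the modular companion of $f$ with the decay of $(-q)_\infty$. That is a restatement of what needs to be proved, not an argument: $f$ genuinely blows up near even-order roots of unity, so you would need quantitative bounds for $f$, uniform over all roots of unity on the circle $|q|=e^{-2\pi y}$, extracted from its completion, together with matching decay rates for $(-q)_\infty$ --- a substantial piece of transformation theory (of the scale needed for the Andrews--Dragonette asymptotics of $f$ itself), not a routine check. The missing idea that makes the minor arc elementary is precisely the Appell--Lerch (Watson) representation, or some equivalent closed form for $\OOO(q)$ valid on the whole circle; without it, your outline does not close.
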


The paper is organized as follows. In Section 2, we study some basic properties of odd-even partitions and introduce an auxiliary Theorem which play important roles to prove Theorem \ref{main}. The proof is given in Section 3. We conclude the paper with the proof of Theorem \ref{od-over} in Section 4. 

\section*{Acknowledgement}

These results are part of the author's PhD thesis, written under the direction of Kathrin Bringmann. 
The author thanks her for suggesting this problem and valuable advice, Don Zagier and Byungchan Kim for insightful comments and for providing the numerical results to  the main theorems, and Jeremy Lovejoy for affording the idea to consider odd-even overpartitions  which expanded the scope of this paper. 
The author also thanks Steffen L\"obrich and Michael Woodbury for their support and fruitful conversation regarding this topic.

\section{Preliminaries}
\subsection{Basic properties of odd-even partitions}
 First we look into the first few values of the odd-even partition function $OE(n)$: 
\begin{center}
\begin{tabular}{c|c|c}
$n$ & relevant partitions of $n$ & $OE(n)$\\
\hline
\hline
1 & 1 & 1\\
2 & --- & 0\\
3 & 3,\ \ 1+2   & 2\\
4 & --- & 0\\
5 & 5,\ \ 1+4 & 2\\
6 & 1+2+3 & 1\\
7 & 7,\ \ 1+6,\ \ 3+4 & 3\\
8 & 1+2+5 & 1\\
\vdots &\vdots &\vdots
\end{tabular}
\end{center}
From these values, we  see that $OE(n)$ is not monotonically increasing. Nevertheless, $OE(n)\le OE(n+2)$ holds for every $n$ due to the fact that we can always make an odd-even partition of $n+2$ from the one of $n$ by adding 2 to the largest part. Thus, $OE(n)$ is monotonically increasing for even  (odd resp.) $n$. This suggests that the appropriate approach to understand the asymptotic behavior of $OE(n)$ is to split the power series of $OE(n)$ into two parts, one with even $n$ and the other with odd $n$, as follows: 
\[
\OO(q)=\sum_{n=0}^\infty OE(n)q^n=\sum_{n=0}^\infty OE(2n)q^{2n} + \sum_{n=0}^\infty OE(2n+1)q^{2n+1} =:\OO_e(q)+\OO_o(q).
\]
Here, for convenience we define $OE(0):=1$.
We further split the $q$ hypergeometric series in \eqref{OEgf} accordingly by considering the parity of powers of $q$ for each summand. Since the $q$-Pochhammer symbol $(q^2;q^2)_m$ in the denominator always produces even powers of $q$, the parity of powers of $q$ depends only on $m(m+1)/2$. Note that $m(m+1)/2$ is even iff $m\equiv 0,3\pmod{4}$ and odd iff $m\equiv 1,2\pmod{4}$. Hence,
\[
\OO_e(q) =\sum_{m\ge0\atop{m\equiv 0,3\pmod{4}}}\frac{q^{\frac{m(m+1)}{2}}}{\(q^2;q^2\)_m},\qquad
\OO_o(q) =\sum_{m\ge0\atop{m\equiv 1,2\pmod{4}}}\frac{q^{\frac{m(m+1)}{2}}}{\(q^2;q^2\)_m}.
\]

\subsection{Ingham's Tauberian Theorem}
From the asymptotic behavior of a power series, Ingham's Tauberian Theorem \cite{Ingham} gives an asymptotic formula for its coefficients.  

\begin{Theorem}[Ingham \cite{Ingham}]\label{tau} Let $f(q)=\sum_{n\ge0}a(n) q^n$ be a power series with weakly increasing nonnegative coefficients and radius of convergence equal to $1$. If there are constants $A>0,\ \lambda,\alpha\in\R$ such that
\[
f\(e^{-\e}\) \sim\lambda\e^\alpha e^{\frac{A}{\e}}
\]
as $\e\to 0^+$, then
\[
a(n)\sim \frac{\lambda}{2\sqrt{\pi}}\frac{A^{\frac{\alpha}{2}+\frac14}}{n^{\frac{\alpha}{2}+\frac34}} e^{2\sqrt{An}}
\]
as $n\to \infty$.
\end{Theorem}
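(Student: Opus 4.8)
The statement is a Tauberian result: from the radial behavior of $f$ along the positive real axis together with the monotonicity of the coefficients, we must recover the precise asymptotics of $a(n)$. The organizing principle is a saddle point. Writing $f(e^{-\e})=\sum_{m\ge0}a(m)e^{-m\e}$, the size of $a(n)$ is governed by the competition between the factor $e^{n\e}$ that isolates it and the growth $e^{A/\e}$ of $f$; balancing $n\e$ against $A/\e$ gives the optimal radius $\e_n:=\sqrt{A/n}$, at which $n\e_n+A/\e_n=2\sqrt{An}$. This already explains the exponential $e^{2\sqrt{An}}$. A formal saddle-point evaluation of the Cauchy integral $a(n)=\frac{e^{n\e}}{2\pi}\int_{-\pi}^{\pi}f(e^{-\e+i\theta})e^{-in\theta}\,d\theta$ at $\e=\e_n$, using the continuation $A/(\e-i\theta)$ of the exponent, produces a narrow Gaussian in $\theta$ of width $\sim\e_n^{3/2}$ whose evaluation gives exactly the constant $\frac{1}{2\sqrt\pi}$ together with the power $n^{-\alpha/2-3/4}$. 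This is only heuristic under the stated hypotheses, as it uses $f$ off the positive real axis, but it pins down the target and motivates the real-variable argument below.

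For a genuine proof I would pass to the summatory function $M(x):=\sum_{0\le n\le x}a(n)$, a nondecreasing step function whose Laplace--Stieltjes transform is $\int_{0^-}^{\infty}e^{-\e t}\,dM(t)=f(e^{-\e})$. The first step is an exponential Tauberian theorem: from $f(e^{-\e})\sim\lambda\e^\alpha e^{A/\e}$ one extracts the asymptotics of $M$. Monotonicity enters decisively here, because it is the weak monotonicity of the increments $a(n)$ (equivalently, the convexity of $M$) that prevents oscillatory cancellation and allows the transform to be inverted from radial data alone. The second step passes from $M$ back to its increments: since $a(n)=M(n)-M(n-1)$ is nondecreasing, a monotone-density argument upgrades the asymptotic for $M$ to the differentiated asymptotic for $a(n)$, which is the claimed formula.

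The hard part is the Tauberian inversion itself, and in particular the sharp constant. A naive sandwiching fails: over the window of width $\sim n^{3/4}$ around the index $n$ that carries the mass of $f(e^{-\e_n})$, the coefficients $a(m)$ vary by the unbounded factor $e^{\sqrt{A}\,n^{1/4}(1+o(1))}$, so replacing $a(m)$ by $a(n)$ there is not legitimate, and the one-sided bound $a(n)\le(1-e^{-\e})e^{n\e}f(e^{-\e})$ already overshoots by a factor $\asymp n^{1/4}$. The resolution, which is the technical core, is to compare the transform at two nearby values of $\e$ and exploit the convexity of $M$ to trap the local growth rate of $M$, thereby recovering the Gaussian normalization and the sharp constant $\frac{1}{2\sqrt\pi}$ without ever invoking the behavior of $f$ off the real axis.
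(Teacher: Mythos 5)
Your statement is one the paper does not prove at all: it is quoted as Ingham's theorem and used as a black box, so the right benchmark is Ingham's own proof. That proof has two steps: (1) a Tauberian theorem for the nondecreasing summatory function $M(x)=\sum_{0\le n\le x}a(n)$, obtained by an exponential change of variables that converts the radial hypothesis into a convolution statement to which \emph{Wiener's Tauberian theorem} applies (the non-vanishing of the Fourier transform of the limiting kernel is what substitutes for any knowledge of $f$ off the positive real axis); and (2) a monotone-density argument, using that the increments $a(n)$ are nondecreasing, to differentiate the asymptotic for $M$. Your architecture --- summatory function, Tauberian inversion, monotone density --- matches this, your saddle-point heuristic correctly predicts the answer, and your diagnosis of the difficulty is exactly right: the saddle window has width $\asymp n^{3/4}$, the coefficients vary across it by $e^{(1+o(1))\sqrt{A}\,n^{1/4}}$, and one-point monotonicity bounds lose a factor $\asymp n^{1/4}$. (One structural slip: in Ingham's scheme, step (1) needs only nonnegativity of the $a(n)$, i.e.\ $M$ nondecreasing; the monotonicity of the increments is consumed entirely by step (2).)

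The genuine gap is that your resolution of the hard step is asserted rather than proved, and as described it cannot work. Comparing the transform at two nearby values of $\varepsilon$ and using convexity yields only difference-quotient, i.e.\ derivative-type, information: since $g(\varepsilon):=f(e^{-\varepsilon})$ is completely monotone, one can legitimately sandwich $-g'(\varepsilon)$ between difference quotients and conclude $-g'(\varepsilon)\sim \lambda A\,\varepsilon^{\alpha-2}e^{A/\varepsilon}$, and similarly for any fixed number of derivatives, which is the same as asymptotics for the moments $\sum_m m^k a(m)e^{-m\varepsilon}$ for fixed $k$. But fixed-order moments only localize the mass of $a(m)e^{-m\varepsilon}$ to scale $o(m_0)$ around the saddle $m_0=A/\varepsilon^2$; to produce the Gaussian factor $\int_{\mathbb{R}} e^{-x^2}\,dx$, and hence the constant $\frac{1}{2\sqrt{\pi}}$, one must control how the mass is distributed across the window at the finer scale $m_0^{3/4}$, which would require moments of order $k\gtrsim m_0^{1/4}$ \emph{uniformly} in $k$ --- precisely what monotonicity plus pairwise comparisons cannot supply, since the $o(1)$ errors compound at each differentiation. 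This is not a removable inconvenience; it is the reason Ingham invokes Wiener's theorem, and that harmonic-analytic input (or some equivalent, e.g.\ genuine off-axis control of $f$) is the ingredient missing from your outline. So the proposal correctly reduces the theorem to its core and correctly places the monotonicity hypothesis, but the core itself --- the inversion with the sharp constant --- remains unproved.
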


\section{Proof of Theorem \ref{main}}
 

\subsection{Asymptotics for the generating functions}
In this section, we estimate the functions $\OO_e(q)$ and $\OO_o(q)$. Throughout the section we set $q=e^{-\e}$.
In order to get the asymptotic formulas for these functions, we exploit the second proof of \cite[Proposition 5]{Don}. The idea of the proof is based on the asymptotics of the individual terms in the series. We first study the asymptotic behavior of the summand and then sum up the asymptotics. We denote the $m$th term in the series \eqref{OEgf} by
\[
f_m=f_m(q):=\frac{q^{\frac{m(m+1)}{2}}}{\left(q^2;q^2\right)_m}.
\]
The sequence $(f_m)_{m\in\N}$ is unimodal, meaning that $f_m$ increases until $f_m$ reaches a maximum value and then decreases. More precisely, for $0<|q|<1$ the ratio 
\begin{equation}\label{ratio}
\frac{f_m}{f_{m-1}}=\frac{q^m}{1-q^{2m}}
\end{equation} 
goes to $\infty$ as $m\to0$, decreases as $m$ grows, and tends to $0$ as $m\to\infty$. To determine when $f_m$ takes the maximum value, we check when the ratio \eqref{ratio} becomes $1$.
This ratio is equal to $1$ exactly for $q^{2m}$ the unique root of the equation $Q^{\frac12}+Q=1$ in the interval $(0,1)$, namely $Q:=\frac{3-\sqrt{5}}{2}$. 
In other words, $f_m$ approaches the maximum value when $q^{2m}$ is close to $Q$ and $m$ near $\Log (Q)/(2\Log (q))$. We further note that 
\[
\frac{\Log (Q)}{2\Log (q)}\to \infty,\quad q^{2m}\to Q\qquad \text{as} \quad q\to 1^{-}.
\]
Thus, the main contribution occurs when the terms are of the form $q^{2m}=Qq^{-2\nu}$ (or $q^{m}=Q^{\frac12}q^{-\nu}$) with $\nu\in\nu_0+\Z$ satisfying $\nu=o(m)$ and $\nu_0$ denotes the fractional part of $\Log (Q)/(2\Log (q))$. In this setting, we evaluate the size of $f_m$. For this, we use the asymptotic expansion from Zagier \cite[Page 53]{Don}. Here the {\it dilogarithm function} $\li(z)$ is defined for $|z|<1$ by 
\[
\operatorname{Li}_{2}(z):=\sum_{n=1}^\infty \frac{z^n}{n^2}.
\]

\begin{lemma}\label{donasymp}
Let $A,B\in\R$ and $A>0$. For the unique root $R\in(0,1)$ of the equation $R+R^A=1$ and $q=e^{-\e}$ with $q^n=Rq^{-\nu}$, $\nu=o(n)$ as $n\to\infty$, we have
\begin{equation*}\label{donasmyp}
 \begin{split}
\Log&\left(\frac{q^{\frac12An^2+Bn}}{(q)_n}\right)\\
&=\left(\frac{\pi^2}{6}-\operatorname{Li}_{2}(R)-\frac12\Log(R)\Log(1-R)\right)\e^{-1}-\frac12\Log\left(\frac{2\pi}{\e}\right)+\Log\left(\frac{R^B}{\sqrt{1-R}}\right)\\
&\quad-\left(\frac{A+R-AR}{2(1-R)}\nu^2-\left(B+\frac{R}{2(1-R)}\right)\nu+\frac{1+R}{24(1-R)}\right)\e+O\(\e^2\),
\end{split}
\end{equation*}
as $\e\to0$.
\end{lemma}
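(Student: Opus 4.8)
The plan is to take logarithms and reduce the whole statement to the asymptotics of the single sum $\sum_{j=1}^{n}\Log(1-e^{-j\e})$. With $q=e^{-\e}$ I would begin from
\[
\Log\!\left(\frac{q^{\frac12 An^2+Bn}}{(q)_n}\right)=-\left(\tfrac12 An^2+Bn\right)\e-\sum_{j=1}^{n}\Log\!\left(1-e^{-j\e}\right).
\]
The hypothesis $q^{n}=Rq^{-\nu}$ reads $n\e=-\Log R-\nu\e$, i.e. $n=\frac{-\Log R}{\e}-\nu$, and I would feed this into the polynomial part. The decisive algebraic input is the defining equation $R+R^{A}=1$, used in the form $A\Log R=\Log(1-R)$: it turns the leading piece $-\frac{A(\Log R)^2}{2}\e^{-1}$ of $-\tfrac12 An^2\e$ into $-\tfrac12\Log R\,\Log(1-R)\,\e^{-1}$, exactly the non-obvious term in the claimed leading coefficient.

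For $S:=\sum_{j=1}^{n}\Log(1-e^{-j\e})$ I would dodge the singularity at $j=0$ by writing $S=\sum_{j=1}^{\infty}g(j)-\sum_{j>n}g(j)$ with $g(t):=\Log(1-e^{-t\e})$. The full series is $\nu$-independent and is pinned down by the classical modular (Dedekind $\eta$) asymptotic
\[
\sum_{j=1}^{\infty}\Log\!\left(1-e^{-j\e}\right)\sim-\frac{\pi^2}{6\e}+\frac12\Log\!\left(\frac{2\pi}{\e}\right)+\frac{\e}{24},
\]
which is what produces the constant $\tfrac12\Log(2\pi/\e)$ and the universal $\tfrac1{24}\e$. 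Since $e^{-n\e}=Re^{\nu\e}\in(0,1)$, the tail $\sum_{j>n}g$ has no singularity, so Euler--Maclaurin applies directly; its integral term I would evaluate exactly from $\frac{d}{dX}\li(e^{-X})=\Log(1-e^{-X})$ and $\li(1)=\frac{\pi^2}{6}$,
\[
\int_{n}^{\infty}\Log\!\left(1-e^{-t\e}\right)dt=\frac1\e\int_{n\e}^{\infty}\Log\!\left(1-e^{-u}\right)du=-\frac1\e\,\li\!\left(e^{-n\e}\right),
\]
and its boundary and first correction enter $S$ as $\tfrac12 g(n)$ and $\tfrac{B_2}{2}g'(n)$. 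Collecting the pieces yields
\[
S\sim-\frac{\pi^2}{6\e}+\frac12\Log\!\left(\frac{2\pi}{\e}\right)+\frac1\e\,\li\!\left(Re^{\nu\e}\right)+\frac12\Log\!\left(1-Re^{\nu\e}\right)+\frac{1+R}{24(1-R)}\,\e+\cdots,
\]
where the dilogarithm enters through the tail integral and the last coefficient has already merged $\tfrac1{24}$ with $\tfrac{R}{12(1-R)}$.

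Finally I would substitute this $S$ into the first display and expand. Using $\frac{d}{ds}\li(Re^{s})=-\Log(1-Re^{s})$ I would Taylor expand $\li(Re^{\nu\e})$ about $R$ (getting $\li(R)$, then $-\Log(1-R)\nu\e$, then $\frac{R}{2(1-R)}\nu^2\e^2$), expand $\Log(1-Re^{\nu\e})$ likewise, and keep $n=\frac{-\Log R}{\e}-\nu$ in the polynomial. Organizing by powers of $\e$, the relation $A\Log R=\Log(1-R)$ again does the essential work: at order $\e^{0}$ it forces the two $\nu$-linear contributions, $A\Log(1/R)\,\nu$ from $-\tfrac12 An^2\e$ and $\Log(1-R)\,\nu$ from the dilogarithm, to cancel, leaving the $\nu$-free constant $-\tfrac12\Log(2\pi/\e)+\Log(R^{B}/\sqrt{1-R})$; and at order $\e$ the $\nu^2$, $\nu^1$, $\nu^0$ coefficients assemble the stated quadratic, using $\frac{A+R-AR}{2(1-R)}=\frac A2+\frac{R}{2(1-R)}$.

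I expect the genuine obstacle to be the rigorous treatment of the lower endpoint: justifying the split of $S$ into the $\eta$-asymptotic plus a smooth Euler--Maclaurin tail, and proving that the remainder is $O(\e^{2})$ \emph{uniformly} in the regime $\nu=o(n)$, so that no $\nu$-dependent term is wrongly promoted to a lower order in $\e$. Once the endpoint is controlled, what remains is careful but routine bookkeeping of which power of $\nu$ lands at which power of $\e$.
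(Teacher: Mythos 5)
The paper does not actually prove this lemma: it is imported verbatim from Zagier \cite[p.~53]{Don} (see the remark immediately following the statement), so there is no internal proof to compare against. Your proposal therefore supplies an argument the paper omits, and it is correct in substance --- indeed it is essentially the standard Euler--Maclaurin/modular-transformation argument underlying Zagier's expansion. The pieces do assemble as you claim: with $n=-\Log(R)/\e-\nu$, the identity $A\Log R=\Log(1-R)$ (from $R^A=1-R$) turns $-\tfrac12A(\Log R)^2\e^{-1}$ into $-\tfrac12\Log(R)\Log(1-R)\e^{-1}$; splitting $\sum_{j=1}^{n}\Log(1-q^j)$ as the full eta-product (contributing $-\tfrac{\pi^2}{6\e}+\tfrac12\Log(2\pi/\e)+\tfrac{\e}{24}$) minus a smooth tail is exactly the right device for avoiding the singular endpoint; and the tail's integral $-\tfrac1\e\li\left(Re^{\nu\e}\right)$, boundary term $\tfrac12\Log\left(1-Re^{\nu\e}\right)$, and $B_2$-correction $\tfrac{R\e}{12(1-R)}$, expanded via $\tfrac{d}{ds}\li(Re^{s})=-\Log(1-Re^{s})$, yield the stated coefficients: the $\nu$-linear contributions at order $\e^{0}$ cancel precisely because $A\Log R=\Log(1-R)$, while $\tfrac1{24}+\tfrac{R}{12(1-R)}=\tfrac{1+R}{24(1-R)}$ and $\tfrac{A}{2}+\tfrac{R}{2(1-R)}=\tfrac{A+R-AR}{2(1-R)}$ produce the constant and $\nu^2$ terms. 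I verified this bookkeeping and it closes.

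The caveat you flag at the end is genuine and worth stating explicitly: the terms discarded in your expansion are of size $\nu\e^2$, $\nu^2\e^2$, and $\nu^3\e^2$, so the bare $O\left(\e^2\right)$ in the lemma is literally correct only for bounded $\nu$; under the stated hypothesis $\nu=o(n)$ the honest error is $O\left(\left(1+|\nu|^3\right)\e^2\right)$. This imprecision is inherited from the source, not introduced by you, and it is harmless for the paper's application: when the expansion is summed against the Gaussian weight $\varphi(\nu)$ in the proof of Theorem \ref{main2}, the effective range of $\nu$ is $O\left(\e^{-1/2}\right)$ up to logarithms, so the accumulated relative error is $O\left(\e^{1/2}\right)$, which still gives the asymptotic main term. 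If you carry out your proof, state the lemma with the $\nu$-dependent error; nothing in the rest of the paper requires more.
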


\begin{remark}
In fact, Zagier obtained the asymptotic expansion with arbitrary many main terms. Since we only use the first few main terms in this paper, we do not need to consider the complete expansion.
\end{remark}

We set $q\mapsto q^2,\ A\mapsto 1/2$, and $B\mapsto 1/4$ in Lemma \ref{donasymp}. Thus, $R$ becomes $Q$ and we have, recalling that $Q^{\frac12}+Q=1$ and $Q=\frac{3-\sqrt{5}}{2}$, 
\begin{multline}\label{logf}
\Log\left(\frac{q^{\frac{m(m+1)}{2}}}{\left(q^2;q^2\right)_m}\right)
=\left(\frac{\pi^2}{6}-\operatorname{Li}_{2}(Q)-\left(\frac12\Log(Q)\right)^2\right)\frac{1}{2\e}\\-\frac12\Log\left(\frac{\pi}{\e}\right)-\frac{\sqrt{5}}{2}\left(\nu^2-\nu+\frac16\right)\e+O\(\e^2\).
\end{multline}
Furthermore, we use the special value of the dilogarithm function from \cite[Section I.1]{Don}
\begin{equation}\label{li2Q}
\li(Q)=\frac{\pi^2}{15}-\left(\Log\(\frac{1+\sqrt{5}}{2}\)\right)^2
\end{equation}
and note that
\begin{equation}\label{Qrel}
\left(\frac12\Log\left(Q\right)\right)^2=\left(\Log(1-Q)\right)^2=\left(\Log\left((1-Q)^{-1}\right)\right)^2=\left(\Log\(\frac{1+\sqrt{5}}{2}\)\right)^2.
\end{equation}
Combining \eqref{logf}, \eqref{li2Q}, and \eqref{Qrel} gives
\begin{equation}\label{phi}
\begin{split}
\Log\left(\frac{q^{\frac{m(m+1)}{2}}}{\left(q^2;q^2\right)_m}\right)&=\frac{\pi^2}{20\e}-\frac12\Log\left(\frac{\pi}{\e}\right)
-\frac{\sqrt{5}}{2}\left(\nu^2-\nu+\frac16\right)\e + O\(\e^2\) \\
&=\Log\(\varphi(\nu)\)+O\(\e^2\),
\end{split}
\end{equation}
where 
\[
\varphi(\nu):=\sqrt{\frac{\e}{\pi}}\exp\left[\frac{\pi^2}{20\e}-\frac{\sqrt{5}}{2}\left(\nu^2-\nu+\frac16\right)\e\right].
\]

We additionally define for $j\in\{0,1,2,3\}$
\begin{equation*}\label{sigmaj}
\mathcal{S}_j :=\sum_{m\equiv j\pmod{4}} \frac{q^{\frac{m(m+1)}{2}}}{\left(q^2;q^2\right)_m},
\end{equation*}
so that we can write
\begin{equation}\label{eo}
\OO_e(q) =\mathcal{S}_0  +\mathcal{S}_3  ,\qquad
\OO_o(q) =\mathcal{S}_1  +\mathcal{S}_2  .
\end{equation}

\begin{theorem}\label{main2} We have 
\[
\OO_e\(e^{-\e}\)  \sim  \OO_o\(e^{-\e}\) \sim \frac{1}{\sqrt{2\sqrt{5}}} e^{\frac{\pi^2}{20\e}}
\]
as $\e\to0^+$.
\end{theorem}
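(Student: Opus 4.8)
The plan is to evaluate each of the four sums $\mathcal{S}_0,\dots,\mathcal{S}_3$ separately and then combine them through \eqref{eo}. Recall from the discussion preceding Lemma \ref{donasymp} that the dominant contribution to $\mathcal{S}_j$ comes from those $m$ for which $q^m=Q^{\frac12}q^{-\nu}$ with $\nu=o(m)$; writing $\nu=\Log(Q)/(2\Log q)-m=-\Log(Q)/(2\e)-m$, the index $\nu$ runs over a coset $\nu_j+4\Z$ for a suitable $\nu_j\in\R$, the common difference being $4$ because $m$ is restricted to a fixed residue class modulo $4$. By \eqref{phi} each summand satisfies $f_m=\varphi(\nu)\left(1+O(\e^2)\right)$, so to leading order
\[
\mathcal{S}_j\sim\sum_{\nu\in\nu_j+4\Z}\varphi(\nu).
\]
The crucial structural point I expect is that the leading asymptotics of this Gaussian sum are independent of the offset $\nu_j$; hence $\mathcal{S}_0\sim\mathcal{S}_1\sim\mathcal{S}_2\sim\mathcal{S}_3$, and the two claimed equivalences $\OO_e\sim\OO_o$ follow immediately, since by \eqref{eo} each of $\OO_e,\OO_o$ is a sum of two such terms.

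Next I would replace the sum by an integral. Since $\varphi(\nu)$ is, up to a prefactor constant in $\nu$, a Gaussian centered at $\nu=\tfrac12$ of width of order $\e^{-\frac12}\to\infty$, while the lattice spacing is the fixed number $4$, the sum is well approximated by $\tfrac14$ times the integral. The cleanest way to make this rigorous with explicit error control is Poisson summation: using $\sum_{\nu\in\nu_j+4\Z}\varphi(\nu)=\tfrac14\sum_{k\in\Z}\widehat{\varphi}(k/4)\,e^{2\pi ik\nu_j/4}$, the Fourier transform of a Gaussian of width $\e^{-\frac12}$ is a Gaussian of width $\e^{\frac12}$, so the frequencies $k\neq0$ are suppressed by a factor $e^{-c/\e}$ relative to the principal term $\tfrac14\widehat{\varphi}(0)=\tfrac14\int_{-\infty}^{\infty}\varphi(t)\,dt$. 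In particular the offset $\nu_j$ enters only through these exponentially negligible terms, which confirms the independence of $j$ asserted above. A direct evaluation of the Gaussian integral gives
\[
\int_{-\infty}^{\infty}\varphi(t)\,dt=\sqrt{\tfrac{2}{\sqrt5}}\,\exp\!\left[\tfrac{\sqrt5}{8}\e-\tfrac{\sqrt5}{12}\e\right]e^{\frac{\pi^2}{20\e}}\sim\sqrt{\tfrac{2}{\sqrt5}}\,e^{\frac{\pi^2}{20\e}},
\]
whence $\mathcal{S}_j\sim\tfrac14\sqrt{2/\sqrt5}\,e^{\pi^2/(20\e)}$ for each $j$. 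Summing two of these via \eqref{eo} and simplifying $\tfrac12\sqrt{2/\sqrt5}=1/\sqrt{2\sqrt5}$ yields the theorem.

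The \emph{main obstacle} is the rigorous control of the error in the approximation $f_m=\varphi(\nu)(1+O(\e^2))$ uniformly across the range of $m$ that actually contributes. Lemma \ref{donasymp} is valid only in the regime $\nu=o(m)$, i.e.\ $\nu=o(\e^{-1})$, whereas the effective support of the Gaussian is $|\nu|\lesssim\e^{-\frac12}$; one must therefore verify that the implied constant in the $O(\e^2)$ term (which in the full expansion carries higher powers of $\nu$) stays controlled on a window such as $|\nu|\le\e^{-\frac12}\Log(1/\e)$, and separately show that the tail where $\nu=o(m)$ fails contributes only exponentially small amounts, using the monotone decay of the ratio \eqref{ratio} away from the peak. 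Once this uniformity is secured, the Gaussian sum and integral estimates above, together with the exponentially small Poisson corrections, combine to give the stated asymptotic; and, reassuringly, the disappearance of the offset $\nu_j$ to leading order is precisely what forces $\OO_e$ and $\OO_o$ to be asymptotically equal.
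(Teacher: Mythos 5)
Your proposal is correct and follows essentially the same route as the paper: the same splitting into the four sums $\mathcal{S}_j$, the same term-by-term approximation $f_m=\varphi(\nu)\left(1+O\left(\e^2\right)\right)$ via Lemma \ref{donasymp}, and the same observation that the Gaussian sum over a coset of $4\Z$ is asymptotically independent of its offset, with matching constants throughout ($\mathcal{S}_j\sim\tfrac14\sqrt{2/\sqrt{5}}\,e^{\pi^2/(20\e)}=\tfrac{1}{2\sqrt{2\sqrt{5}}}e^{\pi^2/(20\e)}$). The only cosmetic difference is that where you invoke Poisson summation directly, the paper packages the same step as a Jacobi theta function and applies its modular inversion formula (itself a consequence of Poisson summation), so the two arguments are interchangeable; your closing remark on the uniformity of the $O\left(\e^2\right)$ error over the effective range of $\nu$ flags a point that the paper also passes over without further justification.
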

\begin{proof}
Using \eqref{phi}, we can also rewrite $\mathcal{S}_j$ in terms of $\varphi(\nu)$ as
\begin{equation}\label{sj}
\mathcal{S}_j=\(1+O\(\e^2\)\)\sum_{\nu\equiv \nu_0+j\pmod{4}} \varphi(\nu). 
\end{equation}
To estimate $\mathcal{S}_j$, we begin by rewriting the sum in $\nu$ on the right-hand side of \eqref{sj} as
\begin{equation}\label{theta}
\begin{split}
\sum_{n\in\Z} \varphi(4n+\nu_0+j) &= \sum_{n\in\frac12+\Z} \varphi(4n+\alpha) \\
&=\sqrt{\frac{\e}{\pi}} e^{\frac{\pi^2}{20\e}}\sum_{n\in\frac12+\Z} e^{-\frac{\sqrt{5}}{2}\left(\(4n+\alpha\)^2-\left(4n+\alpha\right)+\frac16\right)\e} \\
&=\sqrt{\frac{\e}{\pi}} e^{\frac{\pi^2}{20\e}-\frac{\sqrt{5}}{2}\(\alpha^2-\alpha+\frac16\)\e} \vartheta\(\frac{\sqrt{5}\(2\alpha-1\)\e i}{\pi}-\frac12;\frac{8\sqrt{5}\e i}{\pi} \),
\end{split}
\end{equation}
where $\alpha:=2+\nu_0+j$ and the Jacobi Theta function is given for $z\in\C$ and $\tau\in\mathbb{H}$ by
\[
\vartheta\(z;\tau\):=\sum_{n\in\frac12+\Z} e^{\pi i n^2\tau+2\pi in \(z+\frac12\)}. 
\]
The modular inversion formula for the Jacobi theta function \cite[Proposition 1.3 (7)]{Sander}
implies that for $a, b\in \C$ with $\operatorname{Re}(a)>0$
\begin{equation*} 
\begin{split}
\vartheta\(\frac{b\e i}{\pi}-\frac12;\frac{a\e i}{\pi}\) &= i\sqrt{\frac{\pi}{a\e}}e^{-\frac{\pi^2}{a\e}\(\frac{b\e i}{\pi}-\frac12\)^2} \vartheta\(\frac{b}{a}+\frac{\pi i}{2a\e};\frac{\pi i}{a\e} \)\\
&=\sqrt{\frac{\pi}{a\e}} \sum_{n\in \Z}(-1)^n e^{-\frac{\pi^2}{a\e} \(n-\frac{b\e i}{\pi}\)^2}.
\end{split}
\end{equation*}
Plugging in $a\mapsto 8\sqrt{5}$ and $b\mapsto \sqrt{5}(2\alpha-1)$ and simplifying the summation  yields that
\begin{equation}\label{thetacal}
\begin{split}
\vartheta\(\frac{\sqrt{5}\(2\alpha-1\)\e i}{\pi}-\frac12;\frac{8\sqrt{5}\e i}{\pi}\)
&=\sqrt{\frac{\pi}{8\sqrt{5}\e}} \sum_{n\in \Z} (-1)^n e^{-\frac{\pi^2}{8\sqrt{5}\e} \(n-\frac{\sqrt{5}(2\alpha-1)\e i}{\pi}\)^2}\\
&=\sqrt{\frac{\pi}{8\sqrt{5}\e}} e^{\frac{\sqrt{5}(2\alpha-1)^2\e}{32}} \(1+O\(\sum_{n\in \Z\setminus\{0\}}e^{-\frac{\pi^2n^2}{8\sqrt{5}\e}}\)\) \\
&=\sqrt{\frac{\pi}{8\sqrt{5}\e}} \(1+O\(\e\)\).
\end{split}
\end{equation}
The last equality comes directly from the fact that as $\e\to 0^+$
\[
e^{\frac{\sqrt{5}(2\alpha-1)^2\e}{32}}=1+ O\(\e \),
\] 
and 
\[
\sum_{n\in \Z\setminus\{0\}}e^{-\frac{\pi^2n^2}{8\sqrt{5}\e}}\ll e^{-\frac{\pi^2}{8\sqrt{5}\e}}.
\]
From \eqref{sj}, \eqref{theta}, and \eqref{thetacal}, we obtain for any $j\in\{0,1,2,3\}$ 
\[
\mathcal{S}_j \sim \frac{1}{2\sqrt{2\sqrt{5}}} e^{\frac{\pi^2}{20\e}-\frac{\sqrt{5}}{2}\(\alpha^2-\alpha+\frac16\)\e}  \sim \frac{1}{2\sqrt{2\sqrt{5}}} e^{\frac{\pi^2}{20\e}}
\]
as $\e\to0^+$.
Recalling \eqref{eo}, we have the desired result.

\end{proof}

Moreover, since $\OO(q)=\OO_e(q)+\OO_o(q)$, we have following Corollary.
\begin{corollary}
We have 
\[
\OO\(e^{-\e}\) \sim    \sqrt{\frac{2}{\sqrt{5}}} e^{\frac{\pi^2}{20\e}}
\]
as  $\e\to0^+$.
\end{corollary}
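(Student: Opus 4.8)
The plan is to read the result directly off Theorem \ref{main2} using the decomposition $\OO(q)=\OO_e(q)+\OO_o(q)$, which is immediate from the definitions of $\OO_e$ and $\OO_o$ as the even- and odd-indexed parts of the power series $\OO$. Theorem \ref{main2} supplies
\[
\OO_e\(e^{-\e}\)\sim\OO_o\(e^{-\e}\)\sim\frac{1}{\sqrt{2\sqrt{5}}}\,e^{\frac{\pi^2}{20\e}}
\]
as $\e\to0^+$. Adding these two relations and simplifying the resulting constant via
\[
\frac{2}{\sqrt{2\sqrt{5}}}=\frac{2}{2^{1/2}5^{1/4}}=2^{1/2}5^{-1/4}=\sqrt{\frac{2}{\sqrt{5}}}
\]
yields the claimed asymptotic.

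The only point needing a word of justification, and it is a minor one, is the legitimacy of summing two asymptotic equivalences. In general $a\sim cg$ and $b\sim cg$ do not force $a+b\sim 2cg$ if the leading terms could cancel; here, however, both leading coefficients equal $1/\sqrt{2\sqrt{5}}>0$ and share the common positive factor $e^{\pi^2/(20\e)}$, so no cancellation is possible and the addition is valid term by term. Consequently there is no genuine obstacle: the corollary is a direct consequence of Theorem \ref{main2}, and the entire argument reduces to a one-line computation once that theorem is in hand.
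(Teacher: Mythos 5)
Your proposal is correct and matches the paper's argument exactly: the paper also deduces the corollary immediately from Theorem \ref{main2} via the decomposition $\OO(q)=\OO_e(q)+\OO_o(q)$, with the same constant simplification $\tfrac{2}{\sqrt{2\sqrt{5}}}=\sqrt{\tfrac{2}{\sqrt{5}}}$. Your extra remark that adding the two equivalences is legitimate because both leading terms are positive (so no cancellation can occur) is a sound, if brief, justification that the paper leaves implicit.
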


\begin{remark}
One can directly estimate the series $\OO(q)$ by using the Constant Term Method, inserting
an additional variable to identify the series as the constant term of
the product of more familiar number-theoretic functions in a new variable. (See \cite[First proof of Proposition 5]{Don} for more details.)

\end{remark}

\subsection{Applying Ingham's Tauberian Theorem}

Now we are ready to apply Ingham's Tauberian Theorem to the functions $\OO_e\(e^{-\e}\)$ and $ \OO_o\(e^{-\e}\)$. We first deal with the even case.
Setting $a(n)=OE(2n)$ and replacing $q$ by $q^2$ in Theorem \ref{tau} determines the constants
\[
\lambda=\frac{1}{\sqrt{2\sqrt{5}}},\qquad\alpha=0,\qquad A=\frac{\pi^2}{10}.
\]
We reamrk that since $OE(n)$ does not satisfy weakly increasing property with $n=0$, we only consider when $n\ge 1$.
Thus, we have 
\[
OE(2n) \sim \frac{1}{2\sqrt{5}(2n)^{\frac34}} e^{2\pi\sqrt{\frac{n}{10}}}.
\]
By letting $n\mapsto n/2$, we obtain the desired asymptotic formula for $OE(n)$ with even $n$, namely
\begin{equation}\label{OEeven}
OE(n)\sim  \frac{1}{2\sqrt{5} n^{\frac34}} e^{\pi\sqrt{\frac{n}{5}}}.
\end{equation}

For odd $n$, we rewrite the series as 
\[
\OO_o(q)=\sum_{n=0}^\infty OE(2n+1) q^{2n+1}=q\sum_{n=0}^\infty OE(2n+1)q^{2n}.
\]
Since by Theorem \ref{main2}
\[
\OO_o\(e^{-\e}\) = e^{-\e}\sum_{n=0}^\infty OE(2n+1)e^{-2\e n} \sim  \frac{1}{\sqrt{2\sqrt{5}}} e^{\frac{\pi^2}{20\e}},
\]
we have 
\[
\sum_{n=0}^\infty OE(2n+1)e^{-2\e n} \sim \frac{1}{\sqrt{2\sqrt{5}}} e^{\frac{\pi^2}{20\e}}.
\]
Similar to the case of even $n$, setting $a(n)=OE(2n+1)$ and replacing $q$ by $q^2$ yields
\[
OE(2n+1) \sim \frac{1}{2\sqrt{5}(2n)^{\frac34}} e^{2\pi\sqrt{\frac{n}{10}}}.
\]
As before we let $n\mapsto n/2$ and thus we have for even $n$
\begin{equation}\label{OEodd}
OE(n+1)\sim  \frac{1}{2\sqrt{5} n^{\frac34}} e^{\pi\sqrt{\frac{n}{5}}}.
\end{equation}

Finally from \eqref{OEeven} and \eqref{OEodd} we get the desired asymptotic formula for $OE(n)$, for every $n$,
\[
OE(n)\sim  \frac{1}{2\sqrt{5} n^{\frac34}} e^{\pi\sqrt{\frac{n}{5}}}
\]
as $n\to \infty$.

\section{Proof of Theorem \ref{od-over}}
 We follow the same method of the proof of Theorem 4 in \cite{KB}. The strategy is to estimate the generating function near and away from a dominant pole, and then apply Wright's Circle Method.
Although the method of proof is not new, because we are dealing with a different function, the result does not follow directly from the statement of Theorem 4 in \cite{KB}, and thus we include its proof here. However, it is basically the same proof.

\subsection{Asymptotics of $\OOO(q)$} 
Using the Watson's identity for Ramanujan's third order mock theta function $f(q)$ \cite{Watson}
\[
f(q)=\frac{2}{(q)_\infty} \sum_{n\in\Z} \frac{(-1)^n q^{\frac{n(3n+1)}{2}}}{1+q^n},
\]
we rewrite $\OOO(q)$ as
\begin{equation}\label{ovgf2}
\OOO(q)=\frac{2(-q)_\infty}{(q)_\infty} \sum_{n\in\Z} \frac{(-1)^n q^{\frac{n(3n+1)}{2}}}{1+q^n}.
\end{equation}
From this expression we can see that $\OOO(q)$ has a dominant pole at $q=1$. 

\begin{theorem}\label{ovgfes}
Let $M>0$ fixed.
\begin{enumerate} 
\item [\rm{(i)}] For $|x|\le My$, as $y\to 0^+$
\[
\OOO(q)=\frac{2\sqrt{2}}{3} e^{\frac{\pi i}{24\tau}} +O\(ye^{\frac{\pi}{24}\im\(\frac{-1}{\tau}\)} \).
\]
\item [\rm{(ii)}] For $My<|x|\le 1/2$, as $y\to 0^+$
\[
\OOO(q)\ll\frac{1}{y\sqrt{2}} \exp\left[\frac{1}{y}\(\frac{\pi}{8}-\frac{1}{\pi}\(1-\frac{1}{\sqrt{1+M^2}}\) \)\right].
\]
\end{enumerate}
\end{theorem}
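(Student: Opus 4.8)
The plan is to factor $\OOO$ via \eqref{ovgf2} as $\OOO(q)=\frac{2(-q)_\infty}{(q)_\infty}\,S(q)$, where $S(q):=\sum_{n\in\Z}\frac{(-1)^nq^{\frac{n(3n+1)}2}}{1+q^n}$ denotes the Watson sum, to set $q=e^{2\pi i\tau}$ with $\tau=x+iy$, and to estimate the two factors separately in each region. The modular factor is an eta quotient: from $(-q)_\infty=(q^2;q^2)_\infty/(q)_\infty$, together with $(q)_\infty=e^{-\pi i\tau/12}\eta(\tau)$ and $(q^2;q^2)_\infty=e^{-\pi i\tau/6}\eta(2\tau)$, one gets $\frac{(-q)_\infty}{(q)_\infty}=\frac{\eta(2\tau)}{\eta(\tau)^2}$, to which the inversion $\eta(w)=(-iw)^{-1/2}\eta(-1/w)$ applies cleanly.

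For part (i) I would first transform the eta quotient. Applying the inversion to $\eta(\tau)$ and $\eta(2\tau)$ and using $\eta(w)=e^{\pi iw/12}\left(1+O(e^{2\pi iw})\right)$ at the cusp gives $\frac{\eta(2\tau)}{\eta(\tau)^2}=\frac{\sqrt{-i\tau}}{\sqrt2}\,e^{\frac{\pi i}{8\tau}}\left(1+O\left(e^{-c\,\im(-1/\tau)}\right)\right)$ for some $c>0$, the exponent $\frac18=\frac2{12}-\frac1{24}$ recording the three cusp expansions. Next I would estimate $S(q)$ near $q=1$. Since $f(q)=2S(q)/(q)_\infty$ and $(q)_\infty=(-i\tau)^{-1/2}e^{-\frac{\pi i}{12\tau}}(1+o(1))$, the target is $S(q)=\frac{2}{3\sqrt{-i\tau}}\,e^{-\frac{\pi i}{12\tau}}\left(1+O(y)\right)$ uniformly for $|x|\le My$; this is consistent with the elementary radial limit $f(q)\to\sum_{n\ge0}4^{-n}=\frac43$ as $q\to1$. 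Multiplying, the exponents combine as $\frac18-\frac1{12}=\frac1{24}$ and the constants as $2\cdot\frac1{\sqrt2}\cdot\frac23=\frac{2\sqrt2}3$, which is exactly the claimed main term, the relative error $O(y)$ from $S(q)$ dominating and yielding the absolute error $O\left(ye^{\frac\pi{24}\im(-1/\tau)}\right)$ since $\left|e^{\frac{\pi i}{24\tau}}\right|=e^{\frac\pi{24}\im(-1/\tau)}$.

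The main obstacle is precisely this uniform asymptotic for $S(q)$ throughout the cone $|x|\le My$ (not merely radially) with error of the stated order: it is the genuinely mock-modular input, the eta quotient being classical. I would obtain it either by inserting the modular completion of the third order mock theta function $f$ from \eqref{f(q)} and transforming under $\tau\mapsto-1/\tau$, or by a direct Euler--Maclaurin/saddle-point analysis of $S(q)$ in which the dominant contribution comes from the central terms where $q^n\approx1$. Controlling the tails and the off-central ($n\neq0$) contributions uniformly in $x$ is the delicate point, and this is where the argument of \cite{KB} would be adapted.

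For part (ii) the bound comes entirely from the modular factor, with $S(q)$ and the algebraic terms contributing only the prefactor. After the same inversion, $\left|\frac{(-q)_\infty}{(q)_\infty}\right|\ll\frac{|\tau|^{1/2}}{\sqrt2}e^{\frac\pi8\im(-1/\tau)}$, the transformed $q$-products being bounded uniformly for $My<|x|\le\frac12$. I would then bound the exponent by elementary means: writing $\frac\pi8\im(-1/\tau)=\frac\pi8\cdot\frac1y-\frac\pi8\cdot\frac{x^2}{y|\tau|^2}$ and using $1+\frac{y}{|\tau|}\ge\frac8{\pi^2}$ gives $\frac\pi8\cdot\frac{x^2}{y|\tau|^2}\ge\frac1\pi\left(1-\frac{y}{|\tau|}\right)\frac1y\ge\frac1\pi\left(1-\frac1{\sqrt{1+M^2}}\right)\frac1y$, the last step from $\frac{y}{|\tau|}\le\frac1{\sqrt{1+M^2}}$ for $|x|>My$. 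Hence $\frac\pi8\im(-1/\tau)\le\frac1y\left(\frac\pi8-\frac1\pi\left(1-\frac1{\sqrt{1+M^2}}\right)\right)$, which is exactly the claimed exponent, and a trivial (geometric) bound on $S(q)$ together with the algebraic factor supplies the prefactor $\frac1{y\sqrt2}$. The reason for tracking the constant so carefully is that, at the saddle $y\asymp(48n)^{-1/2}$ used later, this exponent falls strictly below the main growth $\pi\sqrt{n/3}$ once $M$ is chosen large enough, rendering region (ii) negligible in Wright's method.
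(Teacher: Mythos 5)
Your proposal does not close either part of the theorem, and in both cases the problem sits at a step you treat as deferred or as obvious.

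In part (i), the decisive estimate is exactly the one you leave open: the uniform expansion of the Watson sum $S(q)$ on the cone $|x|\le My$, which you call the ``genuinely mock-modular input'' and propose to obtain later via a modular completion or an Euler--Maclaurin analysis. As written, (i) is therefore unproved. Moreover, the deferral is unnecessary, and this is the key point of the paper's proof: it never touches $S(q)$ in this region. It works with $\OOO(q)=(-q)_\infty f(q)$ and uses that $f$ is perfectly tame at $q=1$. Writing $q^{n^2}=\prod_{j=1}^n q^{2j-1}$, the $n$-th term of \eqref{f(q)} is $\prod_{j=1}^n q^{2j-1}(1+q^j)^{-2}$, and on the cone these terms decay geometrically in $n$, uniformly as $y\to0^+$; hence $f(q)=f(1)+O(|\tau|)=\tfrac43+O(y)$ by Taylor's theorem, with no modular or mock-modular input at all. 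Multiplying by the eta-inversion expansion \eqref{prodest1} for $(-q)_\infty$ finishes (i). Your route through $S(q)$ converts this easy step into a hard one (which you then do not carry out); the pole of $\OOO$ at $q=1$ comes from $(-q)_\infty$ alone, and the mock theta function contributes only the constant $\tfrac43$ there.

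Part (ii) contains a false step rather than a missing one: the assertion that after inversion ``the transformed $q$-products are bounded uniformly for $My<|x|\le\frac12$''. A single application of $\tau\mapsto-1/\tau$ controls $\eta(2\tau)/\eta(\tau)^2$ only near the cusp $0$; elsewhere in the region the inverted nome $\rho=e^{-\pi i/\tau}$ approaches the unit circle (indeed, roots of unity), and the inverted product $\left((\rho;\rho)_\infty(-\rho;\rho)_\infty^2\right)^{-1}$ blows up exponentially. Concretely, at $\tau=\frac13+iy$ one has $\left|\eta(2\tau)/\eta(\tau)^2\right|\gg\sqrt{y}\,e^{\frac{\pi}{72y}}\to\infty$, while your claimed bound $\frac{|\tau|^{1/2}}{\sqrt2}e^{\frac{\pi}{8}\im(-1/\tau)}$ is $O(1)$ there because $\im(-1/\tau)\approx 9y$; the same failure occurs near every rational with odd denominator $k\ge3$, where the product grows like $e^{\frac{\pi}{8k^2y}}$. (The theorem itself survives since $\frac{\pi}{8k^2}\le\frac{\pi}{72}$ lies below the stated exponent, but your argument never sees these secondary cusps, so it cannot establish this.) This is precisely the difficulty the paper's proof is engineered to avoid: it bounds $\left|\Log\frac{(q^2;q^2)_\infty}{(q)_\infty^2}\right|$ termwise by $\Log\frac{(|q|^2;|q|^2)_\infty}{(|q|)_\infty^2}-2|q|\left(\frac{1}{1-|q|}-\frac{1}{|1-q|}\right)$, applies eta inversion \eqref{qasym} only at the radial point $|q|$ (where it is legitimate), and converts $|1-q|>2\pi y\sqrt{1+M^2}+O(y^2)$ into the exponential saving. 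Your elementary manipulation of $\frac{\pi}{8}\im(-1/\tau)$ does reproduce the correct constant $\frac1\pi\left(1-\frac{1}{\sqrt{1+M^2}}\right)$, but it bounds a quantity that does not dominate the product in this region, so the argument cannot be repaired without replacing the inversion step by the paper's comparison trick or by a genuine cusp-by-cusp (Farey arc) analysis.
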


\begin{remark}
One can find $M>\sqrt{\(\frac{12}{12-\pi^2}\)^2-1}=5.543\dots$, so that the bound in the part \rm{(ii)} is indeed an error term.  
\end{remark}

\begin{proof}
\rm{(i)} To estimate the function $\OOO(q)$ near $q=1$, we first examine $f(q)$. By Taylor's theorem, we have
\[
f(q)=f(1)+O\(|\tau|\),
\] 
and from \eqref{f(q)} we see that
\[
f(1)=\sum_{n=0}^\infty \frac{1}{4^n} =\frac 43.
\]
Thus, we have for $|x|\le My$ 
\begin{equation}\label{fcal}
f(q)=\frac43+O\(y\).
\end{equation}
as $y\to 0^+$. 

Now we turn to the infinite product $(-q)_\infty$ in front of $f(q)$.
Recall that, from the modular inversion formula for Dedekind's eta-function (\cite[P.121, Proposition 14]{Kob}), 
\begin{equation}\label{qasym}
(q;q)_\infty=\frac{1}{\sqrt{-i\tau}}e^{-\frac{\pi i\tau}{12}-\frac{\pi i}{12\tau}}\(1+O\(e^{-\frac{2\pi i}{\tau}}\)\).
\end{equation}
Therefore, we find that
\begin{equation}\label{prodest1}
(-q)_\infty=\frac{(q^2;q^2)_\infty}{(q)_\infty}=\frac{1}{\sqrt{2}}e^{\frac{\pi i}{24\tau}} +O\(ye^{\frac{\pi}{24} \text{Im} \( \frac{-1}{\tau} \) }  \).
\end{equation}
Combining \eqref{fcal} and \eqref{prodest1} gives the proof of the part \rm{(i)}.

\rm{(ii)} In the case of $\OOO(q)$  away from $q=1$, we consider the expression in \eqref{ovgf2}. Note that 
\[
\sum_{n\in\Z} \frac{(-1)^n q^{\frac{n(3n+1)}{2}}}{1+q^n}=\frac12+2 \sum_{n\ge 1} \frac{(-1)^n q^{\frac{n(3n+1)}{2}}}{1+q^n}
\]
and that, for $My<|x|\le 1/2$,
\[
\left|\sum_{n\ge 1} \frac{(-1)^n q^{\frac{n(3n+1)}{2}}}{1+q^n} \right|\le \frac{1}{1-|q|} \sum_{n\ge 1}|q|^{\frac{n(3n+1)}{2}} \ll \frac{1}{y}\cdot y^{-\frac12} =y^{-\frac32}.
\]
This implies 
\begin{equation}\label{sumest}
\left|\sum_{n\in\Z} \frac{(-1)^n q^{\frac{n(3n+1)}{2}}}{1+q^n} \right| \ll y^{-\frac32}.
\end{equation}

Now it remains to bound the infinity product
\[
\frac{(-q)_\infty}{(q)_\infty} =\frac{\(q^2;q^2\)_\infty}{(q)^2_\infty}.
\]
We write this as
\begin{align*}
\Log\(\frac{\(q^2;q^2\)_\infty}{(q)^2_\infty}\)
&=\sum_{n\ge1}\(\Log\(1-q^{2n}\)-2\Log\(1-q^n\)\)
=\sum_{n\ge1}\sum_{m\ge1} \frac{2q^{nm}}{m}-\sum_{n\ge1}\sum_{m\ge1} \frac{q^{2nm}}{m}\\
&=\sum_{m\ge1} \(\frac{2q^m}{m\(1-q^m\)}-\frac{q^{2m}}{m\(1-q^{2m}\)} \)
=\sum_{m\ge1} \frac{2q^{2m-1}}{(2m-1)\(1-q^{2m-1}\)}.
\end{align*}
Thus,
\begin{align*}
\left|\Log\(\frac{\(q^2;q^2\)_\infty}{(q)^2_\infty}\)\right|
&\le\sum_{m\ge1} \frac{2|q|^{2m-1}}{(2m-1)\left|1-q^{2m-1}\right|}\\
&\le \sum_{m\ge1} \frac{2|q|^{2m-1}}{(2m-1)\left(1-|q|^{2m-1}\right)}+\frac{2|q|}{|1-q|}-\frac{2|q|}{1-|q|}\\
&=\Log\(\frac{\(|q|^2;|q|^2\)_\infty}{(|q|)^2_\infty}\)-2|q|\(\frac{1}{1-|q|}-\frac{1}{|1-q|}\).
\end{align*}
From \eqref{qasym}, we have 
\[
\frac{\(|q|^2;|q|^2\)_\infty}{(|q|)^2_\infty}= \sqrt{\frac{y}{2}} e^{\frac{\pi}{8y}} \(1+O\(e^{-\frac{\pi}{y}}\)\).
\]
To evaluate the remaining term, we note that for $My< |x| \le \frac12$, $\cos(\pi My)>\cos(\pi x)$.  Therefore,
\[
\left|1-q\right|^2=1-2e^{-2\pi y}\cos(2\pi x)+e^{-4\pi y}> 1-2e^{-2\pi y}\cos(2\pi My)+e^{-4\pi y}.
\]
By the Taylor expansion around $y=0$, we conclude that 
\[
\left|1-q\right|> 2\pi y\sqrt{1+M^2} +O\(y^2\).
\]
Since $1-|q|=2\pi y+O\(y^2\)$, we arrive at
\begin{equation}\label{prodaway}
\left|\frac{\(q^2;q^2\)_\infty}{(q)^2_\infty}\right|\ll \sqrt{\frac{y}{2}}\exp\left[\frac{1}{y}\(\frac{\pi}{8}-\frac{1}{\pi}\(1-\frac{1}{\sqrt{1+M^2}}\)\) \right].
\end{equation}
Plugging \eqref{sumest} and \eqref{prodaway} into \eqref{ovgf2} yields the part \rm{(ii)}.
\end{proof}

\begin{corollary}\label{coro}
For $My<|x|\le 1/2$ with $M>\sqrt{\(\frac{12}{12-\pi^2}\)^2-1}$, these exists $\epsilon>0$ such that as $y\to 0^+$
\[
\OOO(q)\ll\frac{1}{y\sqrt{2}} e^{\frac{\pi}{24}\(\im\(\frac{-1}{\tau}\)-\epsilon\)}.
\]
\end{corollary}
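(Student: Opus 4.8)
The plan is to deduce the corollary directly from part (ii) of Theorem \ref{ovgfes} by a purely algebraic comparison of exponents, since all of the analytic work has already been done there. Recall that by part (i) the function $\OOO(q)$ grows near the dominant pole $q=1$ like $\frac{1}{y\sqrt{2}}e^{\frac{\pi}{24}\im(-1/\tau)}$, and that this reference size is largest at the center $\tau=iy$ of the circle, where $\im(-1/\tau)=1/y$; the whole point of the corollary is to record that on the minor arc $\OOO(q)$ is smaller than this dominant size by a genuine exponential factor, so that it contributes only an error term in Wright's Circle Method. Abbreviating the exponential constant appearing in part (ii) as
\[
C_M := \frac{\pi}{8} - \frac{1}{\pi}\(1 - \frac{1}{\sqrt{1+M^2}}\),
\]
so that part (ii) reads $\OOO(q) \ll \frac{1}{y\sqrt{2}}e^{C_M/y}$, I would reduce the whole statement to the single inequality $C_M < \frac{\pi}{24}$.

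The key step is to check that this inequality is equivalent to the hypothesis on $M$. Isolating the square-root term, $C_M<\frac{\pi}{24}$ becomes $\frac{\pi}{12}<\frac{1}{\pi}\(1-\frac{1}{\sqrt{1+M^2}}\)$, i.e.\ $\frac{1}{\sqrt{1+M^2}}<\frac{12-\pi^2}{12}$, which (using $\pi^2<12$, so that the right-hand side is positive) rearranges to
\[
M>\sqrt{\(\frac{12}{12-\pi^2}\)^2-1}.
\]
This is exactly the stated threshold, numerically $5.543\dots$, consistent with the preceding Remark, so under the hypothesis on $M$ we indeed have $C_M<\frac{\pi}{24}$.

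Finally I would convert this strict inequality into the stated bound. Setting $\delta := \frac{\pi}{24} - C_M > 0$, part (ii) gives on the minor arc
\[
\OOO(q) \ll \frac{1}{y\sqrt{2}}e^{\frac{C_M}{y}} = \frac{1}{y\sqrt{2}}e^{\frac{\pi}{24y} - \frac{\delta}{y}},
\]
and since $\frac{\delta}{y}\to\infty$ as $y\to0^+$ the factor $e^{-\delta/y}$ beats any fixed constant; hence for every fixed $\epsilon>0$ and all sufficiently small $y$ one has $\frac{C_M}{y}\le\frac{\pi}{24}\(\frac{1}{y}-\epsilon\)$. Recognizing $\frac{1}{y}$ as the value of $\im(-1/\tau)$ at the dominant point $\tau=iy$ yields exactly
\[
\OOO(q)\ll\frac{1}{y\sqrt{2}}e^{\frac{\pi}{24}\(\im\(\frac{-1}{\tau}\)-\epsilon\)},
\]
as claimed. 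Because the genuine analytic estimate is already furnished by Theorem \ref{ovgfes}(ii), I expect no serious obstacle; the only point needing care is the bookkeeping that turns the $x$-independent minor-arc bound $e^{C_M/y}$ into a uniform exponential saving measured against the dominant-pole growth, and this is precisely guaranteed by the equivalence $C_M<\frac{\pi}{24}\iff M>\sqrt{(\frac{12}{12-\pi^2})^2-1}$.
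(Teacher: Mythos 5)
Your proposal is correct and coincides with the paper's own (implicit) justification: the paper gives no separate proof of Corollary \ref{coro}, treating it as an immediate consequence of Theorem \ref{ovgfes}(ii) together with the Remark, which records precisely your equivalence $\frac{\pi}{8}-\frac{1}{\pi}\bigl(1-\frac{1}{\sqrt{1+M^2}}\bigr)<\frac{\pi}{24}\iff M>\sqrt{\bigl(\frac{12}{12-\pi^2}\bigr)^2-1}$. Your reading of $\im\bigl(\frac{-1}{\tau}\bigr)$ as the dominant value $\frac{1}{y}$ (rather than its literal value at a minor-arc point) is also the reading the paper itself adopts when it applies the Corollary to bound $\mathcal{I}_2$, so the two arguments are essentially identical.
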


\subsection{Wright's Circle Method}

In this section, we complete the proof of Theorem \ref{od-over} by applying Wright's Circle Method. By Cauchy's Theorem, we see for $y=\frac{1}{4\sqrt{3n}}$ that
\begin{align*}
\OEO&(n)=\frac{1}{2\pi i}\int_\mathcal{C}\frac{\OOO(q)}{q^{n+1}}dq=\int_{-\frac12}^{\frac12} \OOO\(e^{2\pi ix-\frac{\pi}{2\sqrt{3n}}}\)e^{-2\pi inx+\frac{\pi\sqrt{n}}{2\sqrt{3}}} dx\\
&=\int_{|x|\le My}\OOO \(e^{2\pi ix-\frac{\pi}{2\sqrt{3n}}}\)e^{-2\pi inx+\frac{\pi\sqrt{n}}{2\sqrt{3}}}  dx
+\int_{My<|x|\le\frac12}\OOO\(e^{2\pi ix-\frac{\pi}{2\sqrt{3n}}}\)e^{-2\pi inx+\frac{\pi\sqrt{n}}{2\sqrt{3}}}dx\\
&=:\mathcal{I}_1+\mathcal{I}_2,
\end{align*}
where $\mathcal{C}=\{|q|=e^{-\frac{\pi}{2\sqrt{3n}}}\}$. In fact, the integral $\mathcal{I}_1$ contributes the main term as the integral $\mathcal{I}_2$ is an error term.

In order to evaluate $\mathcal{I}_1$, we introduce a function $P_s(u)$, defined by Wright \cite{Wright}, for fixed $M>0$ and $u\in\mathbb{R}^+$
$$
P_{s}(u):=\frac{1}{2\pi i} \int_{1-Mi}^{1+Mi} v^s e^{u\(v+\frac{1}{v}\)} dv.
$$
This functions is rewritten in terms of the $I$-Bessel function up to an error term. 
\begin{lemma}[\cite{Wright}] As $n\rightarrow \infty$
$$
P_s(u)=I_{-s-1}(2u)+O\(e^u\),
$$
where $I_\ell$ denotes the usual the $I$-Bessel function of order $\ell$.
\end{lemma}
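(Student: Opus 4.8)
The plan is to recognize $P_s(u)$ as the main, saddle-point contribution to the classical Hankel-type contour integral for the $I$-Bessel function, and to bound everything else by $O(e^u)$. Recall the Schl\"afli representation
$$
I_\nu(2u)=\frac{1}{2\pi i}\int_{\mathcal{H}} t^{-\nu-1}e^{u\left(t+\frac1t\right)}\,dt,
$$
where $\mathcal{H}$ is a Hankel contour that comes in from $-\infty$ below the negative real axis, loops once counterclockwise around the origin, and returns to $-\infty$ above it (this contour avoids the branch cut of $t^{-\nu-1}$ along $(-\infty,0]$). Setting $\nu=-s-1$ turns the integrand into exactly $t^s e^{u(t+1/t)}$, the same integrand that defines $P_s(u)$. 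Thus the entire statement reduces to showing that deforming $\mathcal{H}$ so that its central portion is the vertical segment from $1-Mi$ to $1+Mi$ changes the integral by at most $O(e^u)$.

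First I would analyze the phase $\phi(v):=v+1/v$. Its only relevant saddle point is at $v=1$, where $\phi(1)=2$ and $\phi''(1)=2>0$, so the direction of steepest descent through the saddle is vertical; this is precisely why the defining segment of $P_s(u)$ is taken to be vertical through $v=1$. On that segment, writing $v=1+iy$ gives $\operatorname{Re}\phi(v)=1+\frac{1}{1+y^2}$, which attains its maximum $2$ at the saddle $y=0$ and decreases monotonically to $1+\frac{1}{1+M^2}$ at the endpoints $v=1\pm Mi$. Hence along the segment the integrand has size comparable to $e^{2u}$ near the saddle, and $P_s(u)$ captures the full Gaussian main term, which matches the known asymptotic $I_{-s-1}(2u)\sim e^{2u}/\sqrt{4\pi u}$.

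Next I would close up the deformed Hankel contour by connecting the endpoints $1\pm Mi$ to $-\infty$ along arcs chosen to stay inside the region $\{\operatorname{Re}\phi(v)\le 1\}$ (using $\operatorname{Re}\phi(v)=x(1+\tfrac{1}{x^2+y^2})$ to locate a convenient level set), and I would verify that such a routing is possible once $M$ is fixed. On these connecting arcs $|e^{u\phi(v)}|\le e^{u}$, while $|v^s|$ grows at most polynomially and $e^{u\operatorname{Re}\phi(v)}$ decays to $0$ as $\operatorname{Re}(v)\to-\infty$, so the tails converge and the whole connecting contribution is $O(e^u)$. Subtracting, $I_{-s-1}(2u)-P_s(u)=O(e^u)$, which is exactly the claim.

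The main obstacle will be the geometric bookkeeping in the last step: choosing the connecting arcs explicitly and confirming that $\operatorname{Re}\phi(v)\le 1$ can be maintained along them, so that the error is genuinely $O(e^u)$ and not merely smaller than the main term, together with the mild care needed to track the branch of $v^s$ for non-integer $s$ as the contour swings around the cut on the negative real axis. The saddle-point and Gaussian main-term computations, by contrast, are routine.
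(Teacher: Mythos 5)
Your starting point is the right one, and it is essentially the route Wright himself takes: the paper gives no proof of this lemma at all (it is quoted from Wright with a citation), and the Schl\"afli--Hankel loop representation $I_{-s-1}(2u)=\frac{1}{2\pi i}\int_{\mathcal{H}}v^{s}e^{u\left(v+\frac1v\right)}dv$, which reduces the claim to bounding the two connecting pieces between $1\pm Mi$ and $-\infty$, is the standard skeleton. Your saddle-point bookkeeping on the segment (saddle at $v=1$, $\operatorname{Re}\phi(1+iy)=1+\frac{1}{1+y^{2}}$ for the phase $\phi(v)=v+\frac1v$, main term $e^{2u}/\sqrt{4\pi u}$) is also correct.

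The gap is in your final step, and it is not merely bookkeeping. You propose to connect $1\pm Mi$ to $-\infty$ by arcs lying in the region $\{\operatorname{Re}\phi(v)\le 1\}$, but no such arcs exist: they must start at $1\pm Mi$, and by your own formula $\operatorname{Re}\phi(1\pm Mi)=1+\frac{1}{1+M^{2}}>1$. Near its starting point, any admissible connecting arc carries an integrand of modulus $\asymp e^{u\left(1+\frac{1}{1+M^{2}}\right)}\gg e^{u}$, so the estimate you get from this method is $O\bigl(e^{u\left(1+\frac{1}{1+M^{2}}\right)}/u\bigr)$, not $O(e^{u})$. Moreover, this cannot be repaired by cleverer routing: integrating by parts along any connecting contour, the boundary term $\frac{v^{s}e^{u\phi(v)}}{2\pi i\,u\,\phi'(v)}\Bigl|_{v=1\pm Mi}$ depends only on the endpoints, and it oscillates in $u$ with modulus $\asymp e^{u\left(1+\frac{1}{1+M^{2}}\right)}/u$; hence the difference $I_{-s-1}(2u)-P_{s}(u)$ genuinely attains this size for a positive proportion of large $u$, and the error term $O(e^{u})$ is unattainable for fixed $M$ by any contour estimate. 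What your argument proves, once completed correctly (say with horizontal connecting rays at height $\pm M$, on which $\operatorname{Re}\phi(x\pm iM)=x\bigl(1+\frac{1}{x^{2}+M^{2}}\bigr)\le x+\frac{1}{1+M^{2}}$ for $x\le1$), is
\[
P_{s}(u)=I_{-s-1}(2u)+O\Bigl(e^{u\left(1+\frac{1}{1+M^{2}}\right)}\Bigr),
\]
an error still exponentially smaller than the main term $e^{2u}/\sqrt{4\pi u}$. That weaker bound is all that is actually needed where the lemma is applied in Section 4 (there one only needs the discrepancy to be $O\bigl(n^{-1}e^{2u}\bigr)$ with $u=\frac{\pi\sqrt{n}}{2\sqrt{3}}$), so you should either prove and use the lemma in this corrected form, or add the argument showing the weaker exponent suffices in the application; as literally stated, the $O(e^{u})$ cannot be established by the proposed deformation.
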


Using Theorem \ref{ovgfes} (i), we write the integral $\mathcal{I}_1$ as 
$$
\mathcal{I}_1=\int_{|x|\le \frac{M}{4\sqrt{3n}}}  \(\frac{2\sqrt{2}}{3}e^{\frac{\pi i}{24\tau}}+O\(n^{-\frac12}e^{\frac{\pi\sqrt{n}}{2\sqrt{3}}}\) \)  e^{-2\pi inx+\frac{\pi\sqrt{n}}{2\sqrt{3}}}  dx.
$$
By making the change of variables $v= 1- i 4\sqrt{3n}x$, we arrive at
\begin{align}
\mathcal{I}_1&=\int_{1-Mi}^{1+Mi}   \frac{-i}{4\sqrt{3n}}\(\frac{2\sqrt{2}}{3} e^{\frac{\pi\sqrt{n}}{2\sqrt{3}v}} +O\(n^{-\frac12}e^{\frac{\pi\sqrt{n}}{2\sqrt{3}}} \)\)e^{\frac{\pi \sqrt{n} v}{2\sqrt{3}}} dv\notag\\
&=\frac{\pi\sqrt{2}}{3\sqrt{3n}}P_{0}\(\frac{\pi\sqrt{n}}{2\sqrt{3}}\)+O\(n^{-\frac32}e^{\frac{\pi\sqrt{n}}{\sqrt{3}}}\)
=\frac{\pi\sqrt{2}}{3\sqrt{3n}}I_{-1}\(\frac{\pi\sqrt{n}}{\sqrt{3}}\)+O\(n^{-\frac32}e^{\frac{\pi\sqrt{n}}{\sqrt{3}}}\)\notag\\
&= \frac{1}{3^{\frac54} n^{\frac34}} e^{\frac{\pi\sqrt{n}}{\sqrt{3}}}+O\(n^{-\frac32}e^{\frac{\pi\sqrt{n}}{\sqrt{3}}}\),\label{mainterm}
\end{align}
where we use the asymptotic formula for the $I$-Bessel function \cite[4.12.7]{AAR}
\[
I_\ell(x)=\frac{e^x}{\sqrt{2\pi x}}+O\(\frac{e^x}{x^{\frac32}}\).
\]

Now we turn to the integral $\mathcal{I}_2$. From the Corollarly \ref{coro}, we have for $My<|x|\le 1/2$
\[
\mathcal{I}_2\ll  \int_{My<|x|\le\frac12}2\sqrt{6n} e^{\frac{1}{y}\(\frac{\pi}{24}-\epsilon\)} e^{\frac{\pi\sqrt{n}}{2\sqrt{3}}}dx \ll n^{\frac12} e^{\frac{\pi\sqrt{n}}{\sqrt{3}}(1-\epsilon)},
\]
which together with \eqref{mainterm} completes the proof.

\end{document}